\newcommand{\tmop}[1]{\ensuremath{\operatorname{#1}}}
\newenvironment{enumerateroman}{\begin{enumerate}[(i)] }{\end{enumerate}}
\newenvironment{acknowledgements}{\noindent\emph{Acknowledgements.\ }}{}
\newtheorem{theorem}{Theorem}[section]
\newtheorem{cor}[theorem]{Corollary}
\newtheorem{lemma}[theorem]{Lemma}
\newtheorem{prop}[theorem]{Proposition}
\theoremstyle{definition}
\newtheorem{definition}[theorem]{Definition}
\newtheorem{remark}[theorem]{Remark}
\numberwithin{equation}{section}
\newcommand{\mc}[1]{\mathcal{#1}}
\newcommand{\ra}{\rightarrow}
\newcommand{\mat}[2][cccccccccccccccccccccccccccccccccccc]{\left(\begin{array}{#1}#2  \end{array}
  \right)}
\newcommand{\C}[1]{\mathbb{C}^{#1}}
\newcommand{\bs}[1]{\boldsymbol{#1}}
\newcommand{\ch}[1]{\tmop{Ch}(#1)}
\newcommand{\ucp}[2]{\tmop{UCP}(#1, #2)}
\newcommand{\os}[1]{\tmop{OS}(#1)}
\newcommand{\oa}[1]{\tmop{OS}(#1)}
\begin{document}
\keywords{operator system, pure completely positive map,
  boundary representation, peaking representation, matrix convex,
  $C^{\ast}$-convex, Krein-Milman theorem}
\subjclass[2000]{46L07}
\begin{abstract}
In 2006, Arveson resolved a long-standing problem by showing that for
any element $x$ of a separable self-adjoint unital subspace $S\subseteq B(H)$,
$\|x\|=\sup\|\pi(x)\|$, where $\pi$ runs over the boundary
representations for $S$.  Here we show that ``sup'' can be replaced by
``max''. This implies that the Choquet boundary for a separable
operator system is a boundary in the classical sense; a similar result is obtained in terms of pure matrix states
when $S$ is not assumed to be separable.  For matrix convex sets associated to operator systems in matrix algebras, we apply the above results to improve the Webster-Winkler Krein-Milman theorem.
\end{abstract}
\title[Boundary representations and pure cp maps]{Boundary
  representations and pure completely positive maps}
\author{Craig Kleski}
\address{Department of Mathematics\\ University of Virginia\\
  Charlottesville, VA 22904}
\email{ckleski@virginia.edu} 
\date{\today}
\maketitle

\section{Introduction}
Let $B(H)$ be the bounded linear operators on a complex Hilbert space $H$ and let $S\subseteq B(H)$ be a concrete \emph{operator system}: a self-adjoint unital
linear subspace.  We denote by $C^{\ast}(S)$ the $C^{\ast}$-algebra
generated by $S$ in $B(H)$.   A unital completely positive (ucp) map on $S$ that extends uniquely
as a ucp map to a representation $\pi$ of $C^{\ast}(S)$ has the \emph{unique extension
  property} (UEP); if this representation is irreducible, we say that $\pi$
is a \emph{boundary representation} for $S$. In other words, an
irreducible representation $\pi$ of $C^{\ast}(S)$ is a boundary
representation for $S$ if the only ucp extension of $\pi|_{S}$ is
$\pi$.  Let $\partial_{S}$ denote the set of boundary
representations for $S$.
Arveson (\cite{Arveson08}) proved that if $S$ is separable, then $S$ has
sufficiently many boundary representations in the following sense:
for any $n$ and any $(s_{ij})\in M_n(S)$, 
\begin{align*}
\|(s_{ij})\| &= \sup_{\pi\in \partial_{S}}\|(\pi(s_{ij}))\|.
\end{align*}
We improve that result by showing in Theorem~\ref{mainresult} that we can replace the supremum
in the above with a maximum. 

A similar though not identical result can be obtained when $S$ is not assumed to be
separable.  Let $\tmop{CP}(S,B(K))$ denote
the cone of completely positive (cp) maps from $S$ to $B(K)$, and let
$\tmop{UCP}(S,B(K))$ be the convex subset of cp maps that are
unital.  A map $\phi\in \tmop{UCP}(S,B(K))$ is called \emph{pure} if
whenever $\phi-\psi$ is cp (we write $\phi\geq \psi$ in this case) for some $\psi\in\tmop{CP}(S,B(K))$, then there exists $0\leq t \leq 1$ such that
$\psi=t\phi$.  For example, when $K$ is one-dimensional, a pure ucp
map from $S$ to $B(K)$ is just a pure state.  When $K$ is finite-dimensional, the elements of $\tmop{UCP}(S,B(K))$ are called  \emph{matrix
  states}. Denote the set of pure matrix states from $S$ to $M_k$ by $\mathcal{P}_k(S)$, and let
$\mathcal{P}(S)=\bigcup_{i=1}^{\infty}\mathcal{P}_i(S)$. We show in
Theorem~\ref{purenorm} that for any $n$ and any $(s_{ij})\in M_n(S)$,
\begin{align*}
\|(s_{ij})\| &= \max_{\psi\in\mathcal{P}(M_n(S))}\|\psi((s_{ij}))\|.
\end{align*}
The above result, but with ``sup'' in place of ``max'', is contained
elsewhere: both in work of Farenick (\cite{Farenick04}), and in
unpublished work of Zarikian.

We will prove in Theorem~\ref{mainthm} that when $S$ is
separable, every pure matrix state on $S$ is a compression of a boundary
representation for $S$: if $\phi$ is in $\mathcal{P}(S)$, then there exist
$\pi\in \partial_S$ and an isometry $v$ such that
$\phi(\cdot)=v^{\ast}\pi(\cdot)v$.  This generalizes \cite[Theorem
8.2]{Arveson08}.  When we combine Theorem~\ref{mainthm},
Theorem~\ref{purenorm}, and a result of Hopenwasser (see
Remark~\ref{hopenwasser}), we obtain Theorem~\ref{mainresult}, the main result.

Let $X$ be a compact Hausdorff space, and let $M$ be a linear,
separating subspace of $C(X)$ that contains constants.  A
\emph{boundary} for $M$ is a subset $Y$ of $X$, not necessarily
closed, such that for any $f\in M$, there exists $y\in Y$ with $\|f\|=|f(y)|$.  In other words, a boundary for $M$ is a
norm-attaining subset of $X$. There is a rich theory of boundaries in
this setting, the highlight of which is a theorem of Bishop and de
Leeuw for uniform algebras (see \cite[Theorem 6.5]{BdL59} and \cite[p. 39]{Phelps01}).  A
natural extension of the definition of boundary to the case when $S$
is a concrete operator system in $A:=C^{\ast}(S)$ is afforded by equivalence classes of irreducible representations
of $A$. We denote this set by $\hat{A}$, and though it (the
\emph{spectrum} of $A$) is usually topologized, we consider it as merely
a set.  A \emph{boundary} for $S$ is a set $B\subseteq \hat{A}$ such
that for any $n$ and any $(s_{ij})\in M_n(S)$, there exists $[\pi]\in B$
with $\|(s_{ij})\|=\|(\pi(s_{ij}))\|$. Let $\ch{S}$ be the set of
unitary equivalence classes of boundary representations for $S$. We can 
translate the result indicated in the first paragraph into the
language of boundaries: when $S$ is separable, $\ch{S}$ is a
boundary for $S$.  This has immediate consequences for a
certain notion of peaking for operator systems introduced by Arveson
in \cite{Arveson08a}, which we discuss briefly in
Remark~\ref{finalrem}.  

It is possible to show that $\ch{S}$ is a boundary for $S$ by
considering pure states on $M_2(S)$.  The method below
is different and preferred for the light it sheds on pure ucp maps.  Also, many of the results that follow are
phrased in terms of concrete operator systems.  This is merely for
convenience.  The results can also be stated for unital operator
spaces, by noting the correspondence between unital completely
contractive maps on a unital operator space $V$ and ucp maps on the
operator system $V+V^{\ast}$ (see \cite[Proposition 1.2.8]{Arveson69} and \cite[Proposition 2.12]{Paulsen02}).

The collection of matrix states $(\ucp{S}{M_n})_{n\in \mathbb{N}}$ is closed under
finite direct sums and conjugation by isometries; this is the essential feature of a matrix convex set (defined in Section~\ref{four}). Webster and Winkler in \cite{Webster99} proved a Krein-Milman theorem
for compact matrix convex sets using matrix extreme points. In Theorem~\ref{bdrypt} we apply the results of Section~\ref{three}
on boundary representations to improve this result when $S$ is an operator system in a
matrix algebra, using a new notion of extremeness for matrix convex sets that corresponds exactly to boundary representations.

\section{Pure ucp maps}
 Given a linear map
$\phi:E\to F$ between vector spaces, define
$\phi^{(n)}:M_n(E)\to M_n(F)$ as
$\phi^{(n)}((x_{ij}))=(\phi(x_{ij}))$ for all $(x_{ij})\in
M_n(E)$.  We will use $1_{H}$ for the identity in $B(H)$ and $1_k$ for
the identity in $M_k$.  When the context is clear, we simply use 1 as
the identity for unital objects. If two operators $a$ and $b$ are unitarily equivalent,
we write $a\sim_u b$; we use the same notation for unitarily
equivalent representations.

There is an illuminating characterization of pure matrix states in
terms of certain extreme points of matrix convex sets (see
Section~\ref{four}). The full power of this characterization is not necessary here
--- we consider only an important special case. Let $x$ be in $B(H)$ and let $\os{x}$ be the operator system
$\tmop{span}\{x,x^{\ast},1\}$.  The set $\ucp{\os{x}}{M_n}$ encodes the
same information as the \emph{$n^{th}$-algebraic matricial range} of
$x$, which we denote by $W^n(x)$.  It is defined as
\begin{align*}
W^n(x) &:= \{\phi(x):\phi\in\ucp{\os{x}}{M_n}\}.
\end{align*}
Because any $\phi\in \ucp{\os{x}}{M_n}$ is determined by
$\phi(x)$, and any $a\in W^n(x)$ is the image of $x$ under some
$\psi\in \ucp{\os{x}}{M_n}$, we see that $W^n(x)$ and
$\ucp{\os{x}}{M_n}$ determine each other.

The algebraic matricial range is a generalization of the numerical range, and was introduced by Arveson in \cite{Arveson72}.  He observed
that it enjoys a particularly strong convexity property: it is closed under \emph{$C^{\ast}$-convex combinations}; that is, closed under sums of the form
\begin{align}\label{eq90}
\sum_{i=1}^m x_i^{\ast}a_ix_i,
\end{align}
where $a_i$ is in $W^n(x)$, $x_i$ is in $M_n$ for $i=1,2,\ldots,m$, and
$\sum_{i=1}^m x_i^{\ast}x_i=1_n$.  We call a subset of a
$C^{\ast}$-algebra 
\emph{$C^{\ast}$-convex} when it is closed under $C^{\ast}$-convex
combinations. Paulsen and Loebl (\cite{Paulsen81}) defined a
\emph{$C^{\ast}$-extreme} point of a $C^{\ast}$-convex set as an
element $a$ such that whenever $a$ is written as a
$C^{\ast}$-convex combination as in \eqref{eq90}, then under the additional
assumption that each $x_i$ is invertible, $a\sim_u a_i$ for $i=1,2,\ldots,m$.

It can be shown that $\phi\in \ucp{\os{x}}{M_n}$ is pure iff $\phi(x)$
is irreducible and $C^{\ast}$-extreme in $W^n(x)$.  This follows from
\cite[Theorem 5.1]{Farenick04} and an observation preceding Example
2.2 in \cite{Webster99}.

Morenz obtained a Krein-Milman theorem for a compact $C^{\ast}$-convex
set $\Gamma\subseteq M_n$.  He showed that $\Gamma$ is the
$C^{\ast}$-convex hull of certain $C^{\ast}$-extreme points, which are
themselves formed from what he called ``structural elements''.  Although we do not need to define this term, we explain below how structural elements appear when we apply Morenz's theorem to the compact $C^{\ast}$-convex set $W^n(y)$. 
\begin{theorem}[\cite{Morenz94}]\label{morenzthm}
Let $y$ be in $B(H)$ and let $n$ be in $\mathbb{N}$. The set $W^n(y)$ is the $C^{\ast}$-convex hull of its $C^{\ast}$-extreme points as follows: every $a\in W^n(y)$ is a $C^{\ast}$-convex combination of the form
\begin{align*}
a &= \sum_{i=1}^m x_i^{\ast}\psi_i(y)x_i,
\end{align*}
where each $\psi_i\in \ucp{\os{y}}{M_n}$ is such that either 
\begin{enumerateroman}
\item $\psi_i$ is in $\mathcal{P}_n(\os{y})$, or 
\item $\psi_i(y)\sim_u \alpha_i(y)\oplus t_i1_{n-l}$, for $\alpha_i\in
  \mathcal{P}_l(\os{y})$ for some $l<n$ and some $t_i \in \partial W^1(\alpha_i(y))$.
\end{enumerateroman}
We can also arrange that $m\leq 3n^2$.
\end{theorem}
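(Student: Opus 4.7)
The plan is to follow Morenz's three-stage approach: establish compactness, classify the $C^*$-extreme points of $W^n(y)$, and then produce a Carath\'eodory-type decomposition in the $C^*$-convex sense.

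First, I would note that $W^n(y)$ is a compact $C^*$-convex subset of $M_n$. Compactness follows because $\ucp{\os{y}}{M_n}$ is compact in the point-weak topology (a standard consequence of the Banach--Alaoglu theorem applied to the unit ball of cp maps), and the evaluation map $\phi\mapsto \phi(y)$ is continuous and surjects onto $W^n(y)$. The $C^*$-convexity is immediate from Arveson's observation recalled in \eqref{eq90}. Because $W^n(y)\subseteq M_n$ has real dimension at most $2n^2$, classical Carath\'eodory will eventually bound the number of terms.

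Second, I would classify the $C^*$-extreme points of $W^n(y)$. Suppose $a\in W^n(y)$ is $C^*$-extreme, written as $a=\phi(y)$ with $\phi\in\ucp{\os{y}}{M_n}$. If $\phi$ is pure, then $a$ is irreducible by the characterization recalled in the text, placing us in case (i). Otherwise, using a Stinespring-type decomposition and unitary conjugation, $\phi$ splits (up to unitary equivalence) as a direct sum $\alpha\oplus \beta$ with $\alpha$ acting on a smaller space. The $C^*$-extremality of $a$ then forces $\alpha$ to be pure and forces $\beta(y)$ to be a scalar $t 1_{n-l}$: any non-scalar block could be written as a proper $C^*$-convex combination of distinct elements of $W^{n-l}(y)$, which would lift via inflation by isometries to a non-trivial $C^*$-convex expression for $a$, contradicting $C^*$-extremality. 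The same inflation argument, applied to a genuine convex combination writing $t$ as an interior point of $W^1(\alpha(y))$, shows that $t$ must lie in $\partial W^1(\alpha(y))$; this produces case (ii).

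Finally, I would upgrade the classification to a decomposition theorem. Given an arbitrary $a\in W^n(y)$, classical Krein--Milman expresses $a$ as a convex combination of ordinary extreme points of $W^n(y)$, and Carath\'eodory in $M_n$ (real dimension $2n^2$) bounds the number of terms by $2n^2+1$. Each ordinary extreme point is in particular $C^*$-extreme, so it has the form (i) or (ii) above. Writing a convex combination $\sum \lambda_i a_i$ in $C^*$-convex form via $x_i=\sqrt{\lambda_i}\,1_n$ gives the $C^*$-convex expression $a=\sum x_i^*\psi_i(y)x_i$. The bound $m\leq 3n^2$ requires a slightly more careful counting that exploits the extra unitary freedom available in $C^*$-convex sums to merge terms; this is where Morenz's structural analysis enters most directly.

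The main obstacle is the classification step, specifically the proof that in the reducible case the non-pure block must reduce to a scalar on the boundary of the numerical range of the pure part. Ruling out all other reducible shapes amounts to constructing explicit perturbations within $W^n(y)$ that witness a non-trivial $C^*$-convex decomposition, and correctly tracking the unitary equivalences required by the definition of $C^*$-extremeness; this is exactly the content of Morenz's ``structural element'' machinery and is the delicate heart of the argument.
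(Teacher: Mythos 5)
You should first note that the paper offers no proof of Theorem~\ref{morenzthm}: it is quoted from \cite{Morenz94} and used as a black box, so your sketch has to be judged as a reconstruction of Morenz's argument rather than against anything internal to this paper.

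There is a genuine gap, and it sits exactly at the junction of your second and third steps. You classify the $C^{\ast}$-extreme points of $W^n(y)$ as being precisely those of type (i) or (ii), and then reduce the decomposition to classical Krein--Milman plus the claim that every linear extreme point is $C^{\ast}$-extreme, hence of type (i) or (ii). This chain breaks on the simplest nontrivial example. Take $y=\mathrm{diag}(0,1)$, so that $\os{y}$ is the diagonal algebra $\C{2}$ and $W^2(y)$ is the set of all positive contractions in $M_2$. Here $\mathcal{P}_2(\os{y})$ is empty (a pure ucp map would be a compression of an irreducible representation of $\C{2}$ by an isometry, and all irreducible representations of $\C{2}$ are one-dimensional), and case (ii) produces only the scalar matrices $0_2$ and $1_2$, since $W^1(\alpha(y))$ is a single point for each character $\alpha$. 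Yet the rank-one projection $p=\mathrm{diag}(1,0)$ is both a linear extreme point and a $C^{\ast}$-extreme point of $W^2(y)$: if $p=\sum_i x_i^{\ast}a_ix_i$ with each $x_i$ invertible, $\sum_i x_i^{\ast}x_i=1_2$ and $0\leq a_i\leq 1$, then evaluating $\langle p\xi,\xi\rangle=0$ for $\xi\in\ker p$ and $\langle(1-p)\eta,\eta\rangle=0$ for $\eta\in\operatorname{ran}p$ (using $1-p=\sum_i x_i^{\ast}(1-a_i)x_i$) forces $x_i(\ker p)\subseteq\ker a_i$ and $x_i(\operatorname{ran}p)\subseteq\ker(1-a_i)$, so each $a_i$ is a rank-one projection and $a_i\sim_u p$. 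Thus $p$ is a $C^{\ast}$-extreme (and linear extreme) point that is not of type (i) or (ii), refuting the classification in your second step; moreover $p$ cannot be written as a scalar convex combination $\sum_i\lambda_i\psi_i(y)$ of the type-(i)/(ii) elements $0_2$ and $1_2$, since any such combination is a scalar matrix. The decomposition the theorem asserts does exist --- $p=x_1^{\ast}1_2x_1+x_2^{\ast}0_2x_2$ with $x_1=\mathrm{diag}(1,0)$ and $x_2=\mathrm{diag}(0,1)$ --- but it requires genuinely non-scalar (here even non-invertible) coefficients $x_i$. Producing such coefficients in general is precisely the content of Morenz's structural-element induction, and it cannot be recovered from ordinary Krein--Milman and Carath\'eodory as you propose.
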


When $y$ is in $M_n$, the $\psi_i(y)$'s or $\alpha_i(y)$'s (depending on whether we are in
case (i) or (ii) above) are the structural elements of $W^n(y)$.
Because $W^n(y)$ essentially is $\ucp{\os{y}}{M_n}$, Morenz's theorem
may be reinterpreted as a Krein-Milman theorem for
$\ucp{\os{y}}{M_n}$.  In fact, the structural elements of
$\ucp{\os{y}}{M_n}$ are exactly the boundary representations for
$\os{y}$ (\cite{Kleski12}); see also Remark~\ref{lastrem} for how this may
be applied to more general operator systems.

Before we make use of Morenz's theorem, we require a few preliminary results.

\begin{prop}\label{pureandextreme}
Let $S_1\subseteq S_2$ be operator systems with the same unit. If the ucp map $\phi:S_2\ra
B(H)$ is linearly extreme in $\tmop{UCP}(S_2,B(H))$ and $\phi|_{S_1}$ is
pure, then $\phi$ is pure.
\end{prop}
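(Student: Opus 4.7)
The plan is to start with an arbitrary cp map $\psi: S_2 \to B(H)$ with $\phi \geq \psi$ and chase down what this forces. First I would restrict everything to $S_1$: since $\phi|_{S_1} - \psi|_{S_1}$ is still cp and $\phi|_{S_1}$ is pure, the definition of purity gives some $t \in [0,1]$ with $\psi|_{S_1} = t\phi|_{S_1}$. In particular, evaluating at $1 \in S_1$ (both systems share the unit) yields $\psi(1) = t \cdot 1_H$.

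Next I would dispose of the boundary cases. If $t = 0$, then $\psi(1) = 0$, and a cp map that annihilates the unit must be the zero map, so $\psi = 0 = 0\cdot \phi$. If $t = 1$, then $\phi - \psi$ is cp and $(\phi - \psi)(1) = 0$, so $\phi - \psi = 0$ and $\psi = \phi = 1 \cdot \phi$. In either case $\psi$ is a nonnegative scalar multiple of $\phi$.

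For the interesting case $0 < t < 1$, the idea is to manufacture a convex decomposition of $\phi$ out of $\psi$ and $\phi - \psi$, and then pull the trigger on linear extremity. Set
\begin{align*}
\phi_1 \assign \frac{1}{t}\psi, \qquad \phi_2 \assign \frac{1}{1-t}(\phi - \psi).
\end{align*}
Both are cp (since $\psi$ and $\phi - \psi$ are cp and the scaling factors are positive), and both send $1$ to $1_H$ (using $\psi(1) = t\cdot 1_H$), so $\phi_1, \phi_2 \in \ucp{S_2}{B(H)}$. By construction $\phi = t\phi_1 + (1-t)\phi_2$, and since $\phi$ is linearly extreme in $\ucp{S_2}{B(H)}$, we conclude $\phi_1 = \phi$, i.e.\ $\psi = t\phi$, as required.

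I do not expect any serious obstacle: the crux is just recognizing that purity on the subsystem $S_1$ is strong enough to pin down the scalar $t$ (including the value $\psi(1) = t\cdot 1_H$), after which $\psi$ and $\phi - \psi$ can be rescaled into bona fide ucp maps and linear extremity closes the argument. The only thing to verify carefully is that the $t=0$ and $t=1$ endpoints do not need the extremity hypothesis at all, but are handled directly by the fact that a cp map killing the unit is zero.
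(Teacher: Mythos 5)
Your argument is correct and is essentially the paper's own proof: restrict to $S_1$, use purity there to get $\psi|_{S_1}=t\phi|_{S_1}$ and hence $\psi(1)=t1_H$, rescale $\psi$ and $\phi-\psi$ into ucp maps, and invoke linear extremity. The only difference is that you explicitly dispose of the endpoints $t=0$ and $t=1$ via the fact that a cp map killing the unit is zero, whereas the paper simply assumes $0<t<1$; this is a welcome bit of extra care but not a different method.
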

\begin{proof}
Write $\phi=\phi_1+\phi_2$ for $\phi_1,\phi_2\in
\tmop{CP}(S_2,B(H))$; we must show that $\phi_1$ and $\phi_2$ are scalar multiples of $\phi$. Of course, if we restrict $\phi$ and $\phi_1+\phi_2$ to $S_1$, we still have equality. Because $\phi|_{S_1}$ is pure, it follows that $\phi_1|_{S_1}=t\phi|_{S_1}$ for some $0\leq t\leq 1$; thus $\phi_1(1)=t\phi(1)=t1_H$. Similarly, we have  $\phi_2(1)=(1-t)\phi(1)=(1-t)1_H$.  Assuming that $0<t<1$, this implies
that $(1/t)\phi_1$ and $(1/(1-t))\phi_2$ are ucp. Now we can write
$\phi$ as a convex combination of ucp maps:
\begin{align*}
\phi &= t\cdot \frac{1}{t}\phi_1+(1-t)\cdot \frac{1}{1-t}\phi_2.
\end{align*}

Because $\phi$ is linearly extreme, we have
$\phi=(1/t)\phi_1=(1/(1-t))\phi_2$.  We conclude that $t\phi=\phi_1$ and
$(1-t)\phi=\phi_2$, which is what we wanted to show.
\end{proof}

We can endow the bounded operators from $S$ to $B(H)$ with a weak* topology, called the
\emph{bounded weak} or \emph{BW}-topology, via the identification of
this set with a dual Banach space.  In its relative BW-topology, $\ucp{S}{B(H)}$ is compact (see \cite[Section 1.1]{Arveson69} or \cite[Chapter 7]{Paulsen02} for more details).

\begin{cor}\label{pureextension}
Let $S_1\subseteq S_2$ be operator systems with the same unit.  Every pure ucp map on $S_1$ has
a pure extension to $S_2$.
\end{cor}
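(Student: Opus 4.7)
The plan is to combine Arveson's extension theorem with a Krein–Milman argument inside the set of ucp extensions, and then cite Proposition~\ref{pureandextreme} to upgrade from linearly extreme to pure. So let $\phi\in\tmop{UCP}(S_1,B(H))$ be pure, and let
\[
E \assign \{\psi\in \tmop{UCP}(S_2,B(H)) : \psi|_{S_1}=\phi\}.
\]
By Arveson's extension theorem, $E$ is nonempty. It is clearly convex, and BW-closed as a subset of the BW-compact $\tmop{UCP}(S_2,B(H))$, hence BW-compact. By the Krein–Milman theorem, $E$ has a linearly extreme point $\tilde\phi$.

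The main step is to upgrade extremeness in $E$ to extremeness in the full set $\tmop{UCP}(S_2,B(H))$. Suppose $\tilde\phi = t\psi_1 + (1-t)\psi_2$ with $\psi_1,\psi_2\in \tmop{UCP}(S_2,B(H))$ and $0<t<1$. Restricting to $S_1$ gives $\phi = t\,\psi_1|_{S_1} + (1-t)\,\psi_2|_{S_1}$. The key observation is that purity of $\phi$ forces linear extremeness in $\tmop{UCP}(S_1,B(H))$: indeed, $\phi \geq t\,\psi_1|_{S_1}$, so purity yields $t\,\psi_1|_{S_1} = s\phi$ for some $0\leq s\leq 1$; since both $\psi_1|_{S_1}$ and $\phi$ are unital we obtain $\psi_1|_{S_1}=\phi$, and symmetrically $\psi_2|_{S_1}=\phi$. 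Thus $\psi_1,\psi_2\in E$, and extremeness of $\tilde\phi$ in $E$ gives $\psi_1=\psi_2=\tilde\phi$. Hence $\tilde\phi$ is linearly extreme in $\tmop{UCP}(S_2,B(H))$.

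Finally, since $\tilde\phi$ is linearly extreme in $\tmop{UCP}(S_2,B(H))$ and $\tilde\phi|_{S_1}=\phi$ is pure, Proposition~\ref{pureandextreme} applies to give that $\tilde\phi$ itself is pure. This $\tilde\phi$ is the required pure extension of $\phi$ to $S_2$.

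The only potential obstacle is the passage from extremeness in $E$ to extremeness in $\tmop{UCP}(S_2,B(H))$, but purity of $\phi$ handles this cleanly through the scalar-multiple characterization; the rest is a routine application of Arveson extension, BW-compactness, Krein–Milman, and the previous proposition.
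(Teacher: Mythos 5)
Your proof is correct and takes essentially the same approach as the paper: the paper packages your ``upgrade'' step as the observation that the set of ucp extensions of $\phi$ is a face of $\ucp{S_2}{B(H)}$, but the underlying argument --- restricting a convex decomposition to $S_1$ and using purity of $\phi$ to force both restrictions to equal $\phi$ --- is identical, as is the final appeal to Proposition~\ref{pureandextreme}.
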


\begin{proof}
Let $\phi\in \ucp{S_1}{B(H)}$ be pure and let
\begin{align*}
\mathcal{F} &:= \{\psi\in \ucp{S_2}{B(H)} :\psi|_{S_1}=\phi\}.
\end{align*}
We claim that $\mathcal{F}$ is a face.  It is clearly convex and
BW-compact.  Also, if $t\psi_1+(1-t)\psi_2$ is in $\mathcal{F}$ for some
$0<t<1$ and $\psi_1,\psi_2\in \ucp{S_2}{B(H)}$, then
$t\psi_1|_{S_1}+(1-t)\psi_2|_{S_1}=\phi$. Because $\phi$ is pure, we must
have $\phi=\psi_1|_{S_1}=\psi_2|_{S_1}$, and this completes the claim.  Therefore $\mathcal{F}$ has an extreme point $\phi'$ which is an extreme
point of $\ucp{S_2}{B(H)}$. By Proposition~\ref{pureandextreme}, it
follows that $\phi'$ is pure.
\end{proof}

\begin{remark}
The above corollary is particularly useful when $S_1$ is a concrete
operator system and $S_2$ is $C^{\ast}(S_1)$.  In that case, any pure
ucp map $S_1\to B(H)$ has a pure ucp extension $C^{\ast}(S_1)\to
B(H)$. This was also noticed by Arveson (see the remarks following the
proof of \cite[Theorem 2.4.5]{Arveson69}), and proved by Farenick (but for pure matrix states --- see \cite[Theorem B]{Farenick00}).
\end{remark}

We now show the main result of this section.

\begin{theorem}\label{purenorm}
Let $S$ be a concrete operator system, not necessarily separable. For any  $s\in S$, there exists a pure matrix state $\phi$ on $S$ such that $\|\phi(s)\|=\|s\|$.
\end{theorem}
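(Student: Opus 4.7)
The approach is to reduce to the singly generated operator system $\os{s}$ and then apply Morenz's theorem (Theorem~\ref{morenzthm}) to extract a pure matrix state from a norm-attaining ucp map into a matrix algebra. If I can produce a pure matrix state $\vphi$ on $\os{s}$ with $\|\vphi(s)\|=\|s\|$, then Corollary~\ref{pureextension} extends $\vphi$ to a pure ucp map on $S$ with the same value at $s$, giving the desired pure matrix state on $S$. So I may assume $S = \os{s}$.

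The next step is to produce an element of $W^{n}(s)$ of norm exactly $\|s\|$ for some finite $n$. A standard compactness plus Krein-Milman argument (applied to the face of maximizers of the continuous functional $\rho \mapsto \rho(s^{\ast}s)$ on the weak*-compact state space of $C^{\ast}(\os{s})$) furnishes a pure state $\rho$ with $\rho(s^{\ast}s) = \|s\|^{2}$. Let $(\pi,H_{\pi},\xi)$ be its GNS triple; then $\pi$ is irreducible and $\|\pi(s)\xi\|^{2} = \rho(s^{\ast}s) = \|s\|^{2}$. Choose an isometry $V:\mathbb{C}^{n} \to H_{\pi}$ onto the (at most two-dimensional) span of $\{\xi,\pi(s)\xi\}$. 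Then $\phi \assign V^{\ast}\pi(\cdot)V$ lies in $\ucp{\os{s}}{M_{n}}$, and since $\pi(s)\xi$ belongs to the range of $V$ we have $\phi(s)(V^{\ast}\xi) = V^{\ast}\pi(s)\xi$, a vector of norm $\|s\|$; hence $\|\phi(s)\| = \|s\|$.

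Now Morenz's theorem, applied to $y = s$, writes $\phi(s) = \sum_{i=1}^{m} x_{i}^{\ast} \psi_{i}(s) x_{i}$ with $\sum_{i} x_{i}^{\ast} x_{i} = 1_{n}$ and each $\psi_{i} \in \ucp{\os{s}}{M_{n}}$ either (i) pure, or (ii) with $\psi_{i}(s) \sim_{u} \alpha_{i}(s) \oplus t_{i}\, 1_{n-l}$ for some $\alpha_{i} \in \mathcal{P}_{l}(\os{s})$ and $t_{i} \in \partial W^{1}(\alpha_{i}(s))$. The elementary $C^{\ast}$-convex combination norm bound $\|\sum x_{i}^{\ast}\psi_{i}(s)x_{i}\| \leq \max_{i}\|\psi_{i}(s)\|$ (immediate from $\sum x_{i}^{\ast} x_{i} = 1_{n}$), together with $\|\phi(s)\| = \|s\|$ and $\|\psi_{i}(s)\| \leq \|s\|$, forces $\|\psi_{i_{0}}(s)\| = \|s\|$ for some index $i_{0}$.

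If $\psi_{i_{0}}$ is of type (i), it is the desired pure matrix state $\vphi$. In case (ii), the numerical-radius bound $|t_{i_{0}}| \leq \|\alpha_{i_{0}}(s)\|$ combined with $\max(\|\alpha_{i_{0}}(s)\|,|t_{i_{0}}|) = \|\psi_{i_{0}}(s)\| = \|s\|$ forces $\|\alpha_{i_{0}}(s)\| = \|s\|$, and $\alpha_{i_{0}} \in \mathcal{P}_{l}(\os{s})$ is the desired $\vphi$. Corollary~\ref{pureextension} then lifts $\vphi$ back to $S$. The principal obstacle is exhibiting an element of $W^{n}(s)$ of exact norm $\|s\|$: an arbitrary irreducible representation of $C^{\ast}(\os{s})$ need not attain its norm on any vector, but the pure-state / GNS / two-dimensional compression argument supplies one.
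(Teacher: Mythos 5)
Your proposal is correct and follows essentially the same route as the paper: a GNS construction plus a two-dimensional compression to produce a norm-attaining matrix state on $\os{s}$, then Morenz's theorem and the $C^{\ast}$-convex combination norm bound to extract a pure matrix state attaining the norm (handling case (ii) by passing to the pure component $\alpha_{i_0}$), and finally Corollary~\ref{pureextension} to extend to $S$. The only cosmetic differences are your use of a pure state (rather than an arbitrary one) in the GNS step, which is harmless but unnecessary, and your numerical-radius argument in case (ii), where the paper instead observes directly that with $n=2$ and $l=1$ the element $\psi(s)$ is unitarily equivalent to $\rho(s)\oplus\rho(s)$ for a pure state $\rho$.
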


\begin{proof}
Let $s$ be in $S$. First, we show that we can find a matrix state on $S$
which realizes the norm of $s$. There exists a state $\gamma$ on $C^{\ast}(S)$ such
that $\gamma(s^{\ast}s)=\|s\|^2$.  By the GNS construction, there
exist a representation $\pi_{\gamma}$, a Hilbert space $H_{\gamma}$,
and a cyclic vector $\xi_{\gamma}\in H_{\gamma}$ such that
\begin{align*}
\|s\|^2\geq \|\pi_{\gamma}(s)\|^2\geq
\|\pi_{\gamma}(s)\xi_{\gamma}\|^2=\gamma(s^{\ast}s)=\|s\|^2.
\end{align*}
Let $v:\C{2}\to H_{\gamma}$ be an isometry whose image contains $\tmop{span}\{\xi_{\gamma},\pi_{\gamma}(s)\xi_{\gamma}\}$.  A routine calculation shows that $\|v^{\ast}\pi_{\gamma}(s)v\|=\|\pi_{\gamma}(s)\|$. Define
$\phi:\os{s}\to M_2$ by $\phi(a)=v^{\ast}\pi_{\gamma}(a)v$ for all $a\in
\os{s}$; we have $\|\phi(s)\|=\|s\|$.

Next, we show that we can find a \emph{pure} matrix state on $S$ realizing
the norm of $s$. By Theorem~\ref{morenzthm}, we may write $\phi(s)$ as a $C^{\ast}$-convex
combination of certain $C^{\ast}$-extreme points:
\begin{align*}
\phi(s) &= \sum_{i=1}^m x_i^{\ast}\phi_i(s)x_i, 
\end{align*}
where $\phi_i$ is in $\ucp{\os{s}}{M_2}$, $x_i$ is in $M_2$ for
$i=1,2,\ldots,m$ and $\sum_{i=1}^mx_i^{\ast}x_i=1_2$; and each $\phi_i(s)$ is $C^{\ast}$-extreme in
$W^2(s)$ in the way stated in the theorem.  Let
$x$ be the $2m\times 2$ matrix $\mat{x_1 & x_2 & \cdots &
  x_m}^T$.  Note that $\sum_{i=1}^mx_i^{\ast}x_i=1_2$ implies
$x^{\ast}x=1_2$, and so we can write the $C^{\ast}$-convex combination as a compression:
\begin{align}\label{eq50}
\phi(s) &= x^{\ast}\mat{\phi_1(s) & & \\ & \ddots &  \\ & & \phi_m(s)}x.
\end{align}
Recall that $\|s\|=\|\phi(s)\|$.  When we combine this with equation~\eqref{eq50} we obtain
\begin{align*}
\|s\| = \|\phi(s)\| =\left \| x^{\ast}\mat{\phi_1(s) & & \\ & \ddots &  \\ & & \phi_m(s)}x \right \| \leq \left \|\mat{\phi_1(s) & & \\ & \ddots &  \\ & & \phi_m(s)} \right \| = \max_i \|\phi_i(s)\|\leq \|s\|.
\end{align*}
Therefore  $\|s\|=\|\phi_j(s)\|$ for some $1\leq j\leq m$;
let $\psi=\phi_j$.   Now if $\psi$ is as in case (i) of
Theorem~\ref{morenzthm}, we apply Corollary~\ref{pureextension} to obtain a pure extension of $\psi$.  Otherwise, $\psi(s)$ is
unitarily equivalent to $\rho(s)\oplus \rho(s)$ for some pure state $\rho$
on $\os{s}$. By Corollary~\ref{pureextension}, $\rho$ has a pure extension to $S$.  In either case, we have a pure matrix state on $S$ which realizes the norm of $s$.
\end{proof}

\begin{remark}
The above theorem improves \cite[Theorem 2.2]{Pollack72} and
\cite[Theorem 4.7]{SmithWard80}. Both results show that for $n\geq 2$ and for any $s\in S$,
\begin{align*}
\|s\| &= \sup_{x\in W^n(s)}\|x\|.
\end{align*}
A stronger conclusion can be drawn. That this ``sup'' is a ``max'' is clear, because $\phi\mapsto
\phi(x)$ is a continuous map of $\ucp{S}{M_n}$ in its relative BW-topology to $W^n(x)$ in its relative weak topology --- which, in this
finite-dimensional setting, coincides with the norm topology.  Because
the former set is compact, so is $W^n(x)$, and we conclude that the
supremum is attained.  Yet despite this immediate stronger
conclusion, it is not clear that the norm is attained on a pure
matrix state; we have shown in Theorem~\ref{purenorm} that $C^{\ast}$-convexity theory yields a pure matrix state realizing the norm.

Farenick obtained a Krein-Milman theorem (\cite[Theorem 2.3]{Farenick04}) which implies that for any $n$
and any $(s_{ij})\in M_n(S)$,
\begin{align*}
\|(s_{ij})\| = \sup_{\psi\in\mathcal{P}(M_n(S))} \|\psi((s_{ij}))\|.
\end{align*}
This also follows from unpublished work of Zarikian. For any $n\in \mathbb{N}$, we may apply Theorem~\ref{purenorm} to the
operator system $M_n(S)$ to obtain the above result, improving ``sup'' to ``max''.
\end{remark}

\section{Boundary representations}\label{three}
We now state the main theorem.

\begin{theorem}\label{mainresult}
Let $S$ be a concrete separable operator system.  For each $s\in S$, there exists a boundary representation $\pi$ for $S$ such that $\|\pi(s)\|=\|s\|$.
\end{theorem}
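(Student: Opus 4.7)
The plan is to derive the theorem as a short combination of Theorem~\ref{purenorm} with the forward-referenced Theorem~\ref{mainthm}, together with the elementary norm behavior of compressions of $\ast$-representations. Roughly, Theorem~\ref{purenorm} gives a pure matrix state attaining the norm, Theorem~\ref{mainthm} lifts that pure matrix state to a boundary representation, and then an isometry/contractivity sandwich does the rest.

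First, I apply Theorem~\ref{purenorm} directly to $S$ and the given element $s \in S$. This produces an integer $n \in \mathbb{N}$ and a pure matrix state $\phi \in \mathcal{P}_n(S)$ with $\|\phi(s)\| = \|s\|$. Note that this step is valid for arbitrary (not necessarily separable) operator systems and does not yet use any hypothesis on $S$; the $C^{\ast}$-convexity machinery built from Morenz's theorem is what makes it work.

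Next, I invoke Theorem~\ref{mainthm}, which is the step where separability of $S$ is essential. That theorem asserts that every pure matrix state on a separable $S$ is the compression of a boundary representation, so there exist $\pi \in \partial_S$ and an isometry $v$ such that $\phi(\cdot) = v^{\ast}\pi(\cdot)v$. The Hopenwasser ingredient recorded in Remark~\ref{hopenwasser} should enter at exactly this stage, providing the mechanism by which a pure matrix state (which \emph{a priori} only has the unique extension property after composition with boundary theory) is promoted to an honest boundary representation on the same Hilbert space.

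The conclusion is then a one-line sandwich. On the one hand, $\pi$ is a $\ast$-representation of $C^{\ast}(S)$ and hence contractive, so $\|\pi(s)\| \leq \|s\|$. On the other hand, since $v$ is an isometry,
\begin{align*}
\|s\| \;=\; \|\phi(s)\| \;=\; \|v^{\ast}\pi(s)v\| \;\leq\; \|\pi(s)\|.
\end{align*}
Combining these gives $\|\pi(s)\| = \|s\|$, completing the proof.

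The main obstacle is not in this final assembly but lies entirely inside Theorem~\ref{mainthm}: given a pure matrix state realizing the norm, one must upgrade it to a bona fide boundary representation. It is there that separability of $S$ (to run a countable construction) and Hopenwasser's result (to preserve the relevant extension/purity data in a limit) do the real work. Once those tools are available, Theorem~\ref{mainresult} follows immediately from the argument sketched above.
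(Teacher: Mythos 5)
Your proof is correct and essentially identical to the paper's: Theorem~\ref{purenorm} produces a pure matrix state attaining the norm (the paper notes one may take $k\leq 2$), Theorem~\ref{mainthm} realizes it as $v^{\ast}\pi(\cdot)v$ for a boundary representation $\pi$ and an isometry $v$, and the contractivity/isometry sandwich finishes. One small correction to your commentary: Hopenwasser's result plays no role in this proof or inside Theorem~\ref{mainthm}; it enters only in Remark~\ref{hopenwasser}, where one needs to pass from a boundary representation for $M_n(S)$ back to a boundary representation for $S$ in order to realize the norm of a matrix $(s_{ij})\in M_n(S)$ --- a step that is not needed for a single element $s\in S$.
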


To show this, we need some preliminary results. We first prove a lemma
modeled on \cite[Lemma 8.3]{Arveson08}.  Let $H$ be a 
Hilbert space and $(X,\mu)$ be a standard probability space;  suppose
$a_x$ is in $B(H)$ for all $x\in X$.  Assume that $x\mapsto \lambda(a_x)$
is a $\mathbb{C}$-valued Borel function for every vector functional
$\lambda$ on $B(H)$ (i.e. it is \emph{weakly measurable}). We use the expression 
\begin{align}\label{eq7}
b &= \int_X a_x\,d\mu(x)
\end{align} 
to mean that for any vector functional $\lambda$ on $B(H)$,
\begin{align}\label{eq8}
\lambda(b) &= \int_X \lambda(a_x)\,d\mu(x).
\end{align}
The operator $b$ is the \emph{weak integral} of the function $x\mapsto
a_x$, and in this case, equation~\eqref{eq8} in fact holds for every $\sigma$-weakly continuous functional $\lambda$.  Weak integrals can be generalized to Banach spaces; the interested reader may consult \cite{Diestel84} for more
information.  When we replace $B(H)$ with a locally convex vector
space $E$ and we suppose $X$ is a compact convex subset, then if equation~\eqref{eq8} holds for every $\lambda$ in a set of functionals on $E$ that
separates $X$, we say that $b$ is the \emph{barycenter} of the measure
$\mu$. 

Let $H$ be separable with orthonormal basis $\{e_i\}$. 
Equation~\eqref{eq7} says that the $(i,j)$ matrix entry of $b$ is
$\int_{X} \langle a_x e_{j}, e_{i} \rangle\,d\mu(x)$. We will be
interested in the case when an equation like \eqref{eq7} holds for
every $b$ in the image of a ucp map from a separable operator system $S$ into $B(H)$.  Define
\begin{align*}
\tmop{CP}_r(S,B(H)):=\{\psi\in
\tmop{CP}(S,B(H)):\|\psi\|\leq r\}.
\end{align*}
In \cite[Remark
4.2]{Arveson08}, it is shown that a map $X\ni x\mapsto \rho_x\in
\ucp{S}{B(H)}$ is
Borel measurable iff $x\mapsto \rho_x(a)$ is weakly measurable
for every self-adjoint $a\in S$.  This equivalence is also true if we replace $\ucp{S}{B(H)}$ by
$\tmop{CP}_r(S,B(H))$.  In other words,  $x \mapsto \rho_x(a)$ is
weakly measurable for all self-adjoint
$a\in S$ iff $x\mapsto \rho_x\in \tmop{CP}_r(S,B(H))$ is a Borel map.

The following lemma says that if $\rho_x$ is in
$\tmop{CP}_r(S,B(H))$ for all $x\in X$ and $\phi$ is in a face of
$\tmop{CP}_r(S,B(H))$, and $\phi(a)$ is the weak
integral of $\rho_x(a)$ for all $a\in S$, then almost every $\rho_x$
is in the face.

\begin{lemma}\label{arvlemma}
Let $S$ be a separable operator system, $H$ be a separable Hilbert space,
$\phi$ be in a face $\mathcal{F}$ of $\tmop{CP}_r(S,B(H))$, and $(X,\mu)$ be a 
standard 
probability space.  Suppose $\rho_x\in
\tmop{CP}_r(S,B(H))$ for each $x\in X$ and $x\mapsto \rho_x(a)$ is
weakly measurable for each $a\in S$, and that  
\begin{align}\label{eq107}
\phi(a) = \int_X \rho_x(a)\,d\mu(x)
\end{align}
for all $a\in S$.  Then
for a.e. $x$, $\rho_x$ is in $\mathcal{F}$.
\end{lemma}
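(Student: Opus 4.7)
The plan is to proceed by contradiction, using the face property of $\mathcal{F}$ to pass to normalized sub-integrals and then exploit separability to produce a geometric contradiction. Write $C := \tmop{CP}_r(S,B(H))$. Since $S$ and $H$ are separable, $C$ with its relative BW-topology is a compact metrizable convex set, and the hypothesis \eqref{eq107} says exactly that $\phi$ is the barycenter (in the BW sense) of the pushforward probability measure $\mu \circ \rho^{-1}$ on $C$. Let $N = \{x \in X : \rho_x \notin \mathcal{F}\}$; the goal is to show $\mu(N) = 0$, so suppose for contradiction that $\mu(N) > 0$.

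The main tool is the following sub-integral fact. For any Borel $E \subseteq X$ with $\mu(E) > 0$, the normalized weak integral
\[ \phi_E(a) := \frac{1}{\mu(E)}\int_E \rho_x(a)\,d\mu(x) \]
is again cp (since $n$-positivity passes through positive integrals of positive operators, for every $n$) and satisfies $\|\phi_E\| \leq r$ (via $\|\sigma\|=\|\sigma(1)\|$ for cp maps on a unital operator system, integrating the pointwise inequality $0 \leq \rho_x(1) \leq r \cdot 1_H$). Thus $\phi_E \in C$, and when $0 < \mu(E) < 1$ the decomposition $\phi = \mu(E)\phi_E + \mu(E^c)\phi_{E^c}$ writes $\phi$ as a proper convex combination of two elements of $C$, whence the face property of $\mathcal{F}$ forces $\phi_E \in \mathcal{F}$.

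To finish, I take $\mathcal{F}$ to be BW-closed (the situation in this paper's applications). Then $C \setminus \mathcal{F}$ is open in the compact metric space $C$, and using BW-metrizability together with Hahn-Banach separation in the ambient locally convex space I cover $C \setminus \mathcal{F}$ by countably many convex open sets $\{U_n\}$ with $\overline{U_n} \cap \mathcal{F} = \emptyset$. Some $N_n := N \cap \rho^{-1}(U_n)$ has positive $\mu$-measure, so the sub-integral step gives $\phi_{N_n} \in \mathcal{F}$. On the other hand, $\phi_{N_n}$ is the barycenter of a probability measure supported in the closed convex set $\overline{U_n}$, and so $\phi_{N_n} \in \overline{U_n}$, contradicting $\overline{U_n} \cap \mathcal{F} = \emptyset$. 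The degenerate case $\mu(N)=1$ collapses $\phi$ itself into some $\overline{U_n}$, again contradicting $\phi \in \mathcal{F}$.

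The main obstacle is precisely the construction of the countable convex open cover whose closures avoid $\mathcal{F}$: this is where BW-metrizability, Hahn-Banach separation, and the tacit closedness of $\mathcal{F}$ all come to bear. If $\mathcal{F}$ is not a priori closed, the argument above only yields $\rho_x \in \overline{\mathcal{F}}$ a.e., and a further step -- perhaps iterating the face decomposition or invoking Choquet-theoretic machinery -- would be needed to upgrade this to $\rho_x \in \mathcal{F}$. Fortunately the faces that arise in the sequel (e.g., restriction-equality faces of ucp maps) are BW-closed by inspection, so closedness is automatic in use.
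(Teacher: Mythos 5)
Your proof is correct and starts from the same place as the paper's --- both push $\mu$ forward to a Borel probability measure on the compact, convex, metrizable set $C=\tmop{CP}_r(S,B(H))$ and identify $\phi$ as its barycenter --- but the endgame is genuinely different: the paper finishes in one line by citing Bauer's theorem (\cite[Theorem 9.3]{Simon11}), which says precisely that a measure whose barycenter lies in a closed face is concentrated on that face, whereas you reprove that fact from scratch via normalized sub-integrals $\phi_E$, the face property, Hahn--Banach separation of $\mathcal{F}$ from points of $C\setminus\mathcal{F}$, and a Lindel\"of extraction of a countable convex cover. What your route buys is self-containment and an explicit accounting of exactly where closedness of $\mathcal{F}$ enters; what it costs is length and two glossed technicalities. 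First, the separating functionals used for the Hahn--Banach step should be drawn from the span of $\Lambda=\{L_{\gamma,s}:\gamma\in B(H)_{\ast},\,s\in S\}$, both so that they pass through the weak integral (giving $\phi_{N_n}\in\overline{U_n}$) and so that $\rho^{-1}(U_n)$ is $\mu$-measurable; the measurability of $x\mapsto\rho_x$ is the content of the remark preceding the lemma and deserves a sentence. Second, the degenerate case is really $\mu(N_n)=1$ for the chosen $n$ (not $\mu(N)=1$), handled by noting that then $\phi=\phi_{N_n}\in\overline{U_n}$, contradicting $\phi\in\mathcal{F}$. Your caveat about non-closed faces is well taken: the lemma as stated does not require $\mathcal{F}$ closed, yet both your argument and the paper's appeal to Bauer's theorem genuinely use closedness; this is harmless in context because the only face used downstream, $\mathcal{F}=\{l\phi:0\le l\le 1\}$ in the proof of Theorem~\ref{mainthm}, is compact.
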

\begin{proof}
The set $\tmop{CP}_r(S,B(H))$ is BW-compact,
convex, and because $S$ and $H$ are separable, it is also metrizable. Define a Borel measure
$\nu$ on $\tmop{CP}_r(S,B(H))$ by $\nu(E)=\mu\{x:\rho_x\in
E\}$ for any Borel set $E\subset \tmop{CP}_r(S,B(H))$. Let $\psi$ be
the barycenter of the measure $\nu$; we claim that $\psi=\phi$. Let
$\Lambda:=\{L_{\gamma,s}:\gamma\in B(H)_{\ast}, s\in
S\}$, where $L_{\gamma,s}(\psi):=\gamma\circ\psi(s)$ for all $\psi\in
\tmop{CP}_r(S,B(H))$. Any $L\in \Lambda$ is a Borel map from
$\tmop{CP}_r(S,B(H))$ to $\mathbb{C}$ (indeed, $\Lambda \subseteq \tmop{CP}_r(S,B(H))^{\ast})$, so we have
\begin{align}\label{eq4}
\int_{X} L(\rho_x)\,d\mu(x) &= \int_{\tmop{CP}_r(S,B(H))} L(\rho)\,d\nu(\rho).
\end{align}
We can now write  
\begin{align*}
L(\phi) = \int_{\tmop{CP}_r(S,B(H))} L(\rho)\,d\nu(\rho) = L(\psi),
\end{align*}
where the first equality follows equations~\eqref{eq4} and \eqref{eq107},
and the second equality follows from the fact that $\psi$ is the
barycenter of $\nu$. The set $\Lambda$ is separating for
$\tmop{CP}_r(S,B(H))$, since an element $\sigma$ of the latter set is
determined by $S$, and each $\sigma(s)$ is determined by its matrix
entries. This establishes that $\psi=\phi$.  By Bauer's theorem
(\cite[Theorem 9.3]{Simon11}), $\nu(\tmop{CP}_r(S,B(H))\setminus
\mathcal{F})=0$. Thus for a.e. $x$, $\rho_x$ is in $\mathcal{F}$.
\end{proof}

Let $\phi\in \ucp{S}{B(H)}$.  A
\emph{dilation} of 
$\phi$ is a ucp map $\phi':S\to B(K)$, $K\supseteq H$, such that
$\phi'(a)=p_{H}\psi(a)|_{H}$ for all $a\in S$ (where $p_H$ is the
projection of $K$ onto $H$).  A ucp map $\phi$ is called
\emph{maximal} if whenever $\psi$ dilates $\phi$, then $\psi=\phi\oplus \rho$, for some ucp map
$\rho$.  Muhly and Solel showed the significance of maximal ucp maps for the theory of boundary
representations (though in the language of Hilbert modules) in
\cite{MS98}, where they proved that for a representation $\pi$ of $C^{\ast}(S)$, $\pi|_{S}$ has the UEP iff
$\pi|_{S}$ is maximal.  Thus $\pi$ is a boundary representation
for $S$ iff $\pi|_{S}$ is pure and maximal.  In \cite{DM05}, Dritschel and McCullough showed that every ucp
map on $S$ actually has a maximal dilation, building on earlier work of
Agler (\cite{Agler88}).  This allowed them to conclude that every operator system has
sufficiently many representations whose restrictions to $S$ are
maximal.  Arveson then showed in \cite{Arveson08} that when $S$ is separable, those
representations can be taken to be boundary representations using
disintegration theory.  We will use these ideas in the next theorem, where we show that every pure matrix state is a compression of a
boundary representation.

\begin{theorem}\label{mainthm}
Let $S$ be a concrete separable operator system and let
$\phi$ be in $\mathcal{P}_n(S)$. Then $\phi$ has an extension to $C^{\ast}(S)$
of the form $y^{\ast}\pi(\cdot) y$, where $\pi$ is a boundary
representation for $S$ and $y$ is an isometry.
\end{theorem}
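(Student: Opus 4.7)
The plan is to adapt Arveson's disintegration proof of his Theorem~8.2 (the $n=1$ case) to matrix states, using Lemma~\ref{arvlemma} to convert the purity of $\phi$ into a face condition in the disintegration. First, I would apply the Dritschel-McCullough maximal dilation theorem to $\phi$, obtaining a maximal ucp map $\phi':S\to B(K)$ with $\phi(s)=V^*\phi'(s)V$ for some isometry $V:\mathbb{C}^n\to K$. By the Muhly-Solel result recalled above, $\phi'$ extends uniquely to a $*$-representation $\pi$ of $C^*(S)$, with $\pi|_S$ still maximal. If $\pi$ is irreducible then $\pi\in\partial_S$ and we may take $y=V$; otherwise, I disintegrate $\pi\cong\int^{\oplus}\pi_x\,d\mu(x)$ over a standard probability space $(X,\mu)$. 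As in Arveson's proof of Theorem~8.2, separability of $S$ and $H$ ensures that for a.e.\ $x$, the representation $\pi_x$ is irreducible and $\pi_x|_S$ is maximal, so a.e.\ $\pi_x\in\partial_S$.

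Next, the isometry $V$ decomposes into fiberwise linear maps $V_x:\mathbb{C}^n\to H_{\pi_x}$ via $V_x\xi:=(V\xi)(x)$, and a routine matrix-entry calculation yields the weak integral
\[
\phi(s)=\int_X V_x^{\,*}\,\pi_x(s)\,V_x\,d\mu(x),\qquad s\in S.
\]
Since $\int \tmop{tr}(V_x^{\,*}V_x)\,d\mu=n$, the norms $\|V_x\|$ are a.e.\ finite. Truncating to $X_N:=\{x:\|V_x\|^2\le N\}$, the cp map $\phi_N(s):=\int_{X_N}V_x^{\,*}\pi_x(s)V_x\,d\mu$ satisfies $0\le\phi_N\le\phi$, so purity of $\phi$ gives $\phi_N=c_N\phi$ with $c_N\to 1$ as $N\to\infty$. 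Fix $N$ with $c_N>0$ and renormalize $\mu|_{X_N}$ to a probability measure on $X_N$. The set $\mathcal{F}:=\{t\phi:t\ge 0\}\cap\tmop{CP}_r(S,M_n)$ is a face of $\tmop{CP}_r(S,M_n)$ for suitable $r$---purity of $\phi$ implies that any cp summand of a non-negative scalar multiple of $\phi$ is itself a scalar multiple of $\phi$. Applying Lemma~\ref{arvlemma} then gives $V_x^{\,*}\pi_x(\cdot)V_x=t(x)\phi(\cdot)$ for a.e.\ $x\in X_N$ and some $t(x)\ge 0$.

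Finally, $\int t(x)\,d\mu>0$ (else $\phi_N=0$), so $t(x)>0$ on a set of positive measure, and since a.e.\ $\pi_x\in\partial_S$, I can choose $x\in X_N$ with $t(x)>0$ and $\pi_x$ a boundary representation. Evaluating the identity $V_x^{\,*}\pi_x(s)V_x=t(x)\phi(s)$ at $s=1$ yields $V_x^{\,*}V_x=t(x)\cdot 1_n$, so $y:=V_x/\sqrt{t(x)}$ is an isometry $\mathbb{C}^n\to H_{\pi_x}$ with $\phi(s)=y^*\pi_x(s)y$, as required. The main technical obstacle is the possible unboundedness of the fiber maps $V_x$, which necessitates the truncation to $X_N$ before Lemma~\ref{arvlemma} can be applied; a secondary point is the weak measurability of $x\mapsto V_x^{\,*}\pi_x(s)V_x$ as an $M_n$-valued map, which follows from standard measurability properties of direct-integral decompositions.
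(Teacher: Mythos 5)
Your proposal is correct and follows essentially the same route as the paper's proof: maximal dilation of $\phi$, disintegration of the resulting representation into a.e.\ boundary representations, a fiberwise weak-integral decomposition $\phi(s)=\int_X V_x^{\ast}\pi_x(s)V_x\,d\mu(x)$, and an application of Lemma~\ref{arvlemma} to the face generated by $\phi$ via purity, ending with the observation that $V_x^{\ast}V_x=t(x)1_n$ forces the normalized fiber map to be an isometry. The only real difference is the device for handling the non-uniformly bounded fiber maps: you truncate to $X_N=\{x:\|V_x\|^2\le N\}$ and use purity to get $\phi_N=c_N\phi$ with $c_N>0$ for large $N$, whereas the paper reweights the measure by $\|v_x\|^2$ and replaces each $v_x$ by the contraction $\|v_x\|^{-1}v_x$; both workarounds are valid.
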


\begin{proof}
The following diagram captures the setup of the proof.
\[
\xymatrix{
                  & B(H) \ar[d]^{\tmop{Ad}\,v_1} \ar@/^3.5pc/[dd]^{\tmop{Ad}\,v} \\
C^{\ast}(S)  \ar[r]|<<<<{\pi_0}  \ar[ru]^{\pi=\int_{X}^{\oplus}\pi_x} &  B(H_0)  \ar[d]^{\tmop{Ad}\,v_0}       \\
S  \ar@/_.5pc/[ruu]|<<<<<<{\text{max}}  \ar[r]_{\phi\text{ pure}} \ar@{^{(}->}[u]^{}               & M_n
} 
\]
We explain below.

The pure ucp map $\phi$ has an extension to $C^{\ast}(S)$, and Stinespring's theorem allows us to write $\phi(\cdot)=v_0^{\ast}\pi_0(\cdot) v_0$ for a representation
$\pi_0$ acting on a separable Hilbert space $H_0$ and an isometry $v_0$. By the main result of
\cite{DM05} explained above, we can find maximal dilation of
$\pi_0|_{S}$ acting on a separable Hilbert space $H\supseteq H_0$; $p_{H_0}(\cdot)|_{H_0}$ is implemented by an isometry $v_1$.  
Now extend that maximal dilation to $C^{\ast}(S)$ as a representation $\pi$.  We have $\phi(a)=v^{\ast}\pi(a) v$ for all
$a\in S$ (where $v=v_1v_0$).  From the proof of \cite[Theorem
7.1]{Arveson08}, there is a standard probability space $(X,\mu)$ such
that $\pi$ has a disintegration $\int_X^{\oplus} \pi_x\,d\mu(x)$
with respect to the Hilbert space $H=\int_X^{\oplus}H_x\,d\mu(x)$ and for
a.e. $x$,  $\pi_x$ is a boundary representation. Thus
\begin{align*}
\phi(a) =  v^{\ast}\pi(a)v = v^{\ast}\left (\int_{X}^{\oplus}\pi_x(a)\,d\mu(x) \right) v
\end{align*}
for all $a\in S$.  We want to rewrite this expression as a weak
integral, in order to use Lemma~\ref{arvlemma}.

Let $\{e_i:i=1,2,\ldots,n\}$ be the standard 
basis for $\C{n}$, and let $\xi_i\in H$ be $ve_i$ for each $i=1,2,\ldots,n$. Then
\begin{align*}
v^{\ast}\pi(a)v &= \mat{\langle\pi(a)\xi_j,\xi_i\rangle}
\end{align*} 
for all $a\in S$.  By the disintegration
$H=\int_{X}^{\oplus}H_x\,d\mu(x)$, the vectors $\xi_i$ have the form
$(\xi_i(x))$, where $\xi_i(x)\in H_x$ for all $x\in X$ and $x\mapsto \xi_i(x)$ is
square-integrable for $i=1,2,\ldots,n$. Thus we can rewrite this last expression:
\begin{align*}
v^{\ast}\pi(a)v &= \mat{\int_X\langle\pi_x(a)\xi_j(x),\xi_i(x)\rangle\,d\mu(x)}.
\end{align*}
Define $v_x:\C{n}\to H_x$ by $v_xe_i=\xi_i(x)$ for each $i=1,2,\ldots,n$
and $x\in X$.  Each $v_x$ is not necessarily contractive; nevertheless, we have 
\begin{align*}
v_x^{\ast}\pi_x(a)v_x &= \mat{\langle\pi_x(a)\xi_j(x),\xi_i(x)\rangle}.
\end{align*}

To show that the map $x\mapsto v_x^{\ast}\pi_x(a)v_x$ is weakly measurable
for all $a\in S$, fix $a\in S$ and choose $z,w$ in $\C{n}$. We
compute:
\begin{align*}
\langle v_x^{\ast}\pi_x(a)v_xz,w\rangle &= \langle \pi_x(a)v_xz,v_x
w\rangle \\
&=  \sum_{i,j=1}^n \langle z,e_i\rangle \langle e_j,w\rangle \langle \pi_x(a)
\xi_i(x) ,\xi_j(x) \rangle.
\end{align*}
The function $x\mapsto \pi_x(a)$ is weakly measurable (see \cite[IV.8]{Takesaki1}). So the above
is a finite sum of measurable functions, and thus is measurable. We can now write
\begin{align}\label{eq11}
\phi(a) &= \int_{X}v_x^{\ast}\pi_x(a)v_x\,d\mu(x)
\end{align}
for all $a\in S$.

We are tempted to use Lemma~\ref{arvlemma} on the above expression,
except that even in this finite-dimensional setting, it is not clear 
that $\tmop{Ad}\,v_x\circ \pi_x\in \tmop{CP}_r(S,B(H))$, no matter what
value is chosen for $r$. To get around this difficulty, we normalize
the measure $\mu$.  Define $d\nu(x) := \|v_x\|^2\,d\mu(x)$; we will apply this to
the set $X':=\{x\in X:v_x\neq 0\}$. Let $t:=\nu(X')$ and let $y_x:=\|v_x\|^{-1}v_x$ for $x\in X'$.  Then assuming that $0<t<\infty$, the probability measure $t^{-1}\nu$ yields an equation similar to equation~\eqref{eq11}:
\begin{align}\label{eq1}
\int_{X'} y_x^{\ast}\pi_x(a)y_xt^{-1}\,d\nu(x) &=  t^{-1}\int_{X'}(\|v_x\|^{-1}v_x)^{\ast}\pi_x(a)(\|v_x\|^{-1}v_x)\|v_x\|^2\,d\mu(x)\notag \\
&= t^{-1}\int_{X}v_x^{\ast}\pi_x(a)v_x\,d\mu(x) \\
&= t^{-1}\phi(a)\notag
\end{align} 
for all $a\in S$.  In order to apply Lemma~\ref{arvlemma} to equations~\eqref{eq1}, we must
show that 
\begin{enumerateroman}
\item  the map $X'\ni x\mapsto y_x^{\ast}\pi_x(a)y_x$ is
weakly measurable for all $a\in S$;
\item $0<t<\infty$ and so $t^{-1}\nu$ is a probability measure on $X'$; 
\item there exists $0<r<\infty$ such that $t^{-1}\phi\in
  \tmop{CP}_r(S,B(H))$ and $\tmop{Ad}\,y_x\circ \pi_x\in \tmop{CP}_r(S,B(H))$ for each $x\in X'$;
\item $t^{-1}\phi$ lies in a face of $\tmop{CP}_r(S,B(H))$.
\end{enumerateroman}
We have shown that $X\ni x\mapsto v_x^{\ast}\pi_x(a)v_x$
is weakly measurable for all $a\in S$, so to prove (i), it suffices to prove
that $X'\ni x\mapsto \|v_x\|^{-2}$ is measurable.  Let $\{z_i\}$ be a
norm-dense subset of the unit ball of $\C{n}$; since $X\ni x\mapsto
v_x^{\ast}\pi_x(a)v_x$ is weakly measurable when $a=1$, we see that
$x\mapsto \|v_xz_n\|^{2}$ is measurable for each $n$. If we take the
supremum over $n$, the resulting function $X\ni x\mapsto \|v_x\|^{2}$ is
seen to be measurable (see also \cite[A 77]{Dixmier77}).  This implies
$X'\ni x\mapsto \|v_x\|^{-2}$ is measurable.  For item (ii), we can show that $1\leq
t\leq n$ by showing that $1\leq \nu(X)\leq n$, since $t=\nu(X')=\nu(X)$:
\begin{align*}
 1=\int_{X} \|v_xe_1\|^2\,d\mu(x)\leq
 \int_{X}\|v_x\|^2\,d\mu(x)=t\leq
 \int_{X}\tmop{tr}\,v_x^{\ast}v_x\,d\mu(x)= n.
\end{align*} 
To prove (iii), note that
$t^{-1}\leq 1$, and that $\|\tmop{Ad}\,y_x\circ\pi_x\|=\|y_x^{\ast}y_x\|= 1$, which shows
that $t^{-1}\phi$ and  $\tmop{Ad}\,y_x\circ \pi_x$ are in $\tmop{CP}_1(S,B(H))$ for $x\in
X'$.  Lastly, for item (iv),  let $\mathcal{F}:=\{l\phi: 0\leq l\leq
1\}$. The set $\mathcal{F}$ is a face of $\tmop{CP}_1(S,B(H))$ because $\phi$ is
pure, and clearly $t^{-1}\phi$ is in $\mathcal{F}$.

 We can now apply Lemma~\ref{arvlemma} to conclude that
$y_x^{\ast}\pi_x(\cdot)y_x$ is in $\mathcal{F}$ for a.e. $x\in X'$, so there exists $l_x\in [0,1]$ such that $l_x\phi(\cdot)=y_x^{\ast}\pi_x(\cdot)y_x$ for a.e. $x\in X'$. It follows that $l_x1_n=y_x^{\ast}y_x\neq 0$ for a.e. $x\in X'$. Thus the operator $l_x^{-1/2}y_x$ is an isometry (and so $v_x$ is a multiple of an isometry) for
a.e. $x\in X'$.  Finally, $\phi(\cdot)=(l_x^{-1/2}y_x)^{\ast}\pi_x(\cdot)(l_x^{-1/2}y_x)$
for a.e. $x\in X'$, so $\phi$ is a compression of the
boundary representation $\pi_x$ for a.e. $x\in X'$.
\end{proof}

Theorem~\ref{mainthm} generalizes Arveson's result on pure states on
$S$ (\cite[Theorem 8.2]{Arveson08}) to pure ucp maps on $S$: he showed that every pure state on $S$ can be extended to a state $\gamma$ on $C^{\ast}(S)$ whose GNS representation $\pi_{\gamma}$ is a boundary representation for $S$. There are obstacles to successfully adapting the above method to the
infinite-dimensional setting, most of which depend on whether
or not every $v_x$ (or a.e. $v_x$) is bounded.  Even if the appropriate measurability
conditions are satisfied --- so equation~\eqref{eq11} is valid ---
the measure $\nu$ may not be finite.  Without this crucial fact, we
cannot obtain equations~\eqref{eq1}, and so we cannot appeal to
Lemma~\ref{arvlemma}.

With these preliminary results, the proof of the main result follows easily.

\begin{proof}[Proof of Theorem~\ref{mainresult}]
By Theorem~\ref{purenorm}, there is a pure matrix state $\phi:S\ra M_k$,
for some $1\leq k\leq 2$, such that $\|\phi(s)\|=\|s\|$. By
Theorem~\ref{mainthm}, we can find a boundary representation $\pi$ for
$S$ and
an isometry $v$ such that $\phi(a)=v^{\ast}\pi(a) v$ for all $a\in S$. Then
\begin{align*}
\|s\| = \|\phi(s)\| = \|v^{\ast}\pi(s) v\| \leq \|\pi(s)\| \leq \|s\|
\end{align*}
and so $\|s\|=\|\pi(s)\|$.   
\end{proof}

\begin{remark}\label{hopenwasser}
To realize the norm of $(s_{ij})\in M_n(S)$ on $\ch{S}$, we can apply the above results to the operator system $M_n(S)$ as follows: by Theorem~\ref{mainresult}, there is a boundary representation $\pi:C^{\ast}(M_n(S))\to B(H)$ for $M_n(S)$ such that $\|\pi((s_{ij}))\|=\|(s_{ij})\|$. This representation is unitarily equivalent to $\sigma^{(n)}$, where $\sigma$ is an irreducible representation of $C^{\ast}(S)$. By the main result of \cite{Hopenwasser73}, this $\sigma$ is a boundary representation for $S$. Thus $[\sigma]\in \ch{S}$ realizes the norm of $(s_{ij})$.
\end{remark}

\begin{remark}\label{finalrem}
In \cite{Arveson08a}, Arveson defined a \emph{peaking representation}
for a concrete operator system $S$ to be an irreducible representation
$\pi$ of $C^{\ast}(S)$ such that there exist an $n$ and an $(s_{ij})\in M_n(S)$ satisfying
\begin{align*}
\|\pi^{(n)}((s_{ij}))\| > \|\sigma^{(n)}((s_{ij}))\|
\end{align*}
for all irreducible representations $\sigma\nsim_u \pi$.  It follows
immediately from Theorem~\ref{mainresult} that when $S$ is separable, all
peaking representations are boundary representations.

Let $X$ be a compact metrizable space, and suppose $M$ is a linear, uniformly closed, separating subspace of $C(X)$ that contains constants.  The set of peak points for $M$ is dense in the Choquet boundary for $M$; when $M$ is a uniform algebra, the set of peak points for $M$ is exactly the Choquet boundary for $M$ (\cite{BdL59}). This is in stark contrast to the noncommutative case: there are operator algebras with no peaking representations.  For example,
let $x$ be the unilateral shift on $B(\ell^2)$, and let $\oa{x}$ be the
operator algebra generated by $x$. Recall that the spectrum of
$C^{\ast}(x)$ can be identified with $\{\tmop{id}\}\cup \mathbb{T}$
(see \cite[Example VII.3.3]{Davidson96}). The quotient map
$C^{\ast}(x)\to C^{\ast}(x)/\mc{K}\cong C(\mathbb{T})$ is completely
isometric on $\oa{x}$; the irreducible representations
parametrized by $\mathbb{T}$ are exactly the boundary representations
for $\oa{x}$. The boundary representations are quotients of the
identity representation, so none can be peaking for $\oa{x}$.  The
identity also cannot be peaking for $\oa{x}$, since for any
$(s_{ij})\in M_n(\oa{x})$, there exists a boundary representation for $\oa{x}$ realizing the norm by Theorem~\ref{mainresult}.  We conclude that $\oa{x}$ has no peaking representations.  

\end{remark}

We intend to explore in a later paper the conditions under which an operator system has peaking representations, and when analogues of classical results for peaking phenomena hold in the noncommutative setting.

\section{Applications to operator systems in matrix algebras}\label{four}
Let $E$ be a locally convex vector space.  A \emph{matrix convex set} in $E$ is a collection $\bs{K}=(K_n)_{n\in\mathbb{N}}$ of sets $K_n\subseteq M_n(E)$ such that every sum of the form
\begin{align}\label{eq60}
a &= \sum_{i=1}^m v_i^{\ast} a_i v_i
\end{align}
is in $K_n$, where $a_i$ is in $K_{n_i}$ and $v_i$ is in $M_{n_i,n}$ for $i=1,2,\ldots,m$, and
$\sum_{i=1}^m v_i^{\ast}v_i=1_n$.  Equivalently, a matrix
convex set in $E$ is a collection $\bs{K}$ of sets $K_n\subseteq M_n(E)$ that
is closed under finite direct sums and compressions.  From now on, we will
abbreviate $(K_n)_{n\in \mathbb{N}}$ as $(K_n)$. The
definition of matrix convex set is due to Wittstock
(\cite{Wittstock84}); some important properties of matrix convex sets were
proved in \cite{Effros97}.  In \cite{Webster99}, Webster and Winkler showed a number of interesting results on matrix convex sets.   Below we outline some of their work, which, when combined with the results from the previous sections, yields a connection to boundary representations.

We will be interested in the case when each $K_n$ is compact in the
product topology on $M_n(E)$, and we refer to such $\bs{K}$ as 
\emph{compact} matrix convex sets.  The matrix convex combination
\eqref{eq60} is \emph{proper} when each $v_i$ is surjective.  An
element $a$ is called \emph{matrix extreme} if whenever it is written
as a proper matrix convex combination as in \eqref{eq60}, then
$a\sim_u a_i$ for each $i=1,2,\ldots,m$ (\cite[Definition
2.1]{Webster99}). Let $\bs{\partial K}$ denote the set of matrix
extreme points of $\bs{K}$ and let $\overline{\tmop{co}}(\bs{\partial K})$ be the
closed matrix convex hull of $\bs{\partial K}$. Webster and Winkler
proved that $\overline{\tmop{co}}(\bs{\partial K})=\bs{K}$ when
$\bs{K}$ is compact (\cite[Theorem 4.3]{Webster99}).  They also showed that every compact matrix convex set ``is'' the collection of matrix state spaces of an operator system as follows: a \emph{matrix affine} mapping on $\bs{K}$ is a collection $\bs{\theta}:=(\theta_n)$ of maps $\theta_n:K_n\to M_n(F)$ for a vector space $F$ such that 
\begin{align*}
\theta_n\left [ \sum_{i=1}^m v_i^{\ast}a_iv_i\right ] &= \sum_{i=1}^m v_i^{\ast}\theta_{n_i}(a_i)v_i
\end{align*}
where $\sum_{i=1}^m v_i^{\ast}a_iv_i$ is a matrix convex combination in $K_n$. If each $\theta_n$ is a homeomorphism, then $\bs{\theta}$
is a \emph{matrix affine homeomorphism}.  Let $A(\bs{K})$ denote the
set of matrix affine mappings from $\bs{K}$ to $\mathbb{C}$. 
Remarkably, this is an (abstract) operator system, and $\bs{K}$ and 
$(\ucp{A(\bs{K})}{M_n})$ --- which is a compact matrix convex set in
$A(\bs{K})^{\ast}$ --- are matrix affinely homeomorphic
(\cite[Proposition 3.5]{Webster99}). For example, a compact matrix
convex set in $\mathbb{C}$ is $(W^n(x))$ for some Hilbert space
operator $x$. (This fits nicely with the observation in
\cite[Proposition 31]{Paulsen81} that $W^n(x)$ is the prototypical
compact $C^{\ast}$-convex set in $M_n$.)  We will exploit the
identification of $\bs{K}$ and $(\ucp{A(\bs{K})}{M_n})$ repeatedly in
what follows.  We adopt the following notation:
\begin{align}\label{eq102}
\bs{K} &\longleftrightarrow (\ucp{A(\bs{K})}{M_n}) \notag\\
a &\longmapsto \phi_a \\
a_{\psi} &\longmapsfrom \psi.\notag
\end{align}
 Using this identification, Farenick showed (\cite[Theorem
B]{Farenick00}) that the matrix extreme points of $\bs{K}$ are exactly the
pure ucp maps in $(\ucp{A(\bs{K})}{M_n})$.

A ucp map $\phi:S_1 \to S_2$ between operator systems is a
\emph{complete order isomorphism} if $\phi$ has an inverse which is
also ucp.  In this case, $S_1$ and $S_2$ are isomorphic as operator
systems.  By a fundamental result of Choi and Effros (\cite{ChoiEffros77}), an abstract
operator system $S$ can be realized
as a concrete operator system: there exist a Hilbert space $H$ and a
complete order injection $\phi:S\to B(H)$ (i.e. $S$ is completely
order isomorphic to its image in $B(H)$).  From now on, we will assume
without loss of generality that $A(\bs{K})$ is concrete.

There are several ways to characterize boundary representations for
operator systems in matrix algebras (see \cite{Blecher07},
\cite[4.3.7]{BlecherLeMerdy}, \cite{Arveson08c}); here we present another that
shows a connection between boundary representations for $A(\bs{K})$ and a certain type of extreme point of
$\bs{K}$.  
\begin{definition}
A \emph{boundary point} of a matrix convex set $\bs{K}$ is
an element $b\in K_n$ such that whenever $b$ is a matrix convex
combination 
\begin{align}\label{eq106}
b &= \sum_{i=1}^m v_i^{\ast}a_iv_i,
\end{align}
not necessarily proper, of elements $a_i\in K_{n_i}$, then $a_i\sim_u
b$ if $n_i\leq n$; otherwise, $a_i\sim_u b\oplus c_i$ for some
$c_i\in \bs{K}$.
\end{definition}
The motivation for this definition is the following: a matrix extreme
point $b\in K_n$  is an element that cannot be written as a matrix convex
combination of elements that appear ``below'' it in the
hierarchy $\ldots,K_{n+1},K_n,K_{n-1},\ldots,K_1$.   One would
like to define a notion of extremeness that also rules out being a
matrix convex combination of elements ``above'' in the hierarchy ---
except in a trivial way --- and the definition of boundary point does this.   Evidently, every boundary point is a matrix
extreme point, but not
every matrix extreme point is a boundary point.  For example, let $x\in M_3$ be 
\begin{align*}
   x  &=\begin{array}{lll}x_1\oplus x_2, & x_1=1, & x_2=\mat{0 & 2 \\ 0 & 0},
     \end{array}
\end{align*}
and let $\bs{K}$ be the matrix convex set $(W^n(x))$. It is easy to
see that $x_1\in W^1(x)$ and $x_2\in W^2(x)$ are matrix extreme points
of $\bs{K}$, but because $x_1$ is a proper compression of the irreducible matrix $x_2$, it
cannot be a boundary point.  Nevertheless, when $A(\bs{K})$ acts on a finite-dimensional Hilbert space, $\bs{K}$ has ``enough'' boundary points. 

\begin{theorem}\label{bdrypt}
Let $\bs{K}$ be a compact matrix convex set in a locally convex vector
space $E$.  Suppose $A(\bs{K})$ acts on a finite-dimensional
Hilbert space.  The boundary points of $\bs{K}$ correspond exactly to the boundary representations for $A(\bs{K})$.
\end{theorem}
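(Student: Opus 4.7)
The plan is to work entirely through the identification $b \leftrightarrow \phi_b$ of \eqref{eq102}, which turns matrix convex combinations in $\bs{K}$ into compressions and dilations of ucp maps on $A(\bs{K})$, and unitary equivalence of elements of $\bs{K}$ into unitary equivalence of ucp maps. Under this dictionary I expect the boundary point condition on $b \in K_n$ to match the conjunction of three properties of $\phi_b$: purity (equivalent to matrix extremeness by Farenick), irreducibility in the sense $\phi_b(A(\bs{K}))' = \mathbb{C} 1_n$, and the unique extension property. I would prove the two implications separately.

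For the direction boundary point $\Longrightarrow$ boundary representation, I would proceed in four steps. First, matrix extremeness gives purity. Second, a non-trivial projection $P$ in $\phi_b(A(\bs{K}))'$ would produce a ucp decomposition $\phi_b = \phi_1 \oplus \phi_2$, which translates into a matrix convex combination $b = v_1^* a_1 v_1 + v_2^* a_2 v_2$ with the $v_i$ being the block projections onto $P \mathbb{C}^n$ and $(1-P) \mathbb{C}^n$, whose levels $d_i$ are strictly less than $n$; the boundary point condition then demands $a_i \sim_u b$, which is impossible on size grounds, so $\phi_b$ must be irreducible. Third, take the minimal Stinespring dilation $\phi_b = V^* \rho V$ with $\rho$ irreducible (by purity); if $\dim \rho > n$, then $b$ is a one-term matrix convex combination of $b' := a_{\rho|_{A(\bs{K})}}$, and the boundary point condition gives $b' \sim_u b \oplus c$, producing an $n$-dimensional subspace invariant under $\rho(A(\bs{K}))$ and hence under $\rho(A)$, contradicting irreducibility of $\rho$; thus $\dim \rho = n$ and $\phi_b$ itself extends to an irreducible representation $\pi$ of $A := C^{\ast}(A(\bs{K}))$. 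Fourth, for the UEP, let $\tilde{\phi}: A \to M_n$ be any ucp extension of $\phi_b$, with Stinespring $\tilde{\phi} = W^* \sigma W$, and split $\sigma = \sigma_{\pi} \oplus \sigma_{\perp}$ into its $\pi$-isotypic part and its complement, writing $W = (W_{\pi}, W_{\perp})^T$. Purity together with irreducibility forces every block $W_{\pi, j}$ of $W_{\pi}$ to be a scalar multiple of $1_n$, so the $\sigma_{\pi}$-part contributes $(\sum_j t_j) \pi$ to $\tilde{\phi}$; if $W_{\perp} \neq 0$, then $\phi_b$ appears as a compression of $\sigma_{\perp}|_{A(\bs{K})}$, and the boundary point condition produces $\pi$ as a subrepresentation of $\sigma_{\perp}$, contradicting the isotypic decomposition. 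Hence $W_{\perp} = 0$ and $\tilde{\phi} = \pi$.

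For the converse, let $\pi$ be a boundary representation and set $b = a_{\pi|_{A(\bs{K})}} \in K_n$. Given any decomposition $b = \sum v_i^* a_i v_i$, I would extend each $\phi_{a_i}$ to a ucp map $\tilde{\phi}_{a_i} : A \to M_{n_i}$; then with $V = (v_1, \ldots, v_m)^T$ and $\Phi = \bigoplus_i \tilde{\phi}_{a_i}$, the map $V^* \Phi V$ is a ucp extension of $\phi_b$ to $A$, so the UEP gives $\pi = V^* \Phi V$. The multiplicativity of $\pi$ combined with the Schwarz inequality for the ucp map $\Phi$ yields the intertwining $\Phi(a) V = V \pi(a)$, which componentwise reads $\tilde{\phi}_{a_i}(a) v_i = v_i \pi(a)$. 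From this, $v_i^* v_i$ commutes with $\pi(A) = M_n$, so $v_i^* v_i = \lambda_i 1_n$ and $v_i = \sqrt{\lambda_i}\, u_i$ with $u_i$ a genuine isometry whenever $v_i \neq 0$, forcing $n_i \geq n$. Applying the intertwining on both sides shows that the off-diagonal blocks of $\tilde{\phi}_{a_i}$ relative to $\mathbb{C}^{n_i} = u_i \mathbb{C}^n \oplus (u_i \mathbb{C}^n)^{\perp}$ vanish, so $\tilde{\phi}_{a_i}$ splits as $\pi \oplus \rho_i$ for some ucp $\rho_i : A \to M_{n_i - n}$; restricting to $A(\bs{K})$ gives $a_i \sim_u b \oplus c_i$, and the summand $c_i$ disappears exactly when $n_i = n$.

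The main technical hurdle is step four of the first direction: promoting the boundary point condition, phrased only in terms of decompositions at level $n$, into uniqueness of the ucp extension to all of $A$. The argument leans crucially on the hypothesis that $A(\bs{K})$ acts on a finite-dimensional Hilbert space, so that $A$ is a finite direct sum of matrix algebras and every ucp extension of $\phi_b$ has a finite-dimensional Stinespring space that decomposes cleanly into the $\pi$-isotypic component and its orthogonal complement; the discreteness of this decomposition is what allows a single application of the boundary point condition to close out the argument. The converse direction is mostly a Schwarz-inequality computation, and its main content is just extracting the intertwining and applying Schur's lemma to $v_i^* v_i$.
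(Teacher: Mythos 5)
Your proposal is correct, but the forward direction takes a genuinely different route from the paper. The paper gets ``boundary point $\Rightarrow$ boundary representation'' cheaply by citing Theorem~\ref{mainthm}: once the boundary point condition (applied to a proper decomposition into matrix extreme points) shows $\phi_b$ is pure, Theorem~\ref{mainthm} exhibits $\phi_b$ as $v^{\ast}\pi(\cdot)v$ for a boundary representation $\pi$, and a single application of the boundary point condition to this one-term combination forces $v$ to be unitary (a proper $v$ would split the pure map $\pi|_{A(\bs{K})}$ as a direct sum). You instead rebuild the boundary representation from scratch: the minimal Stinespring dilation of a pure extension gives an irreducible $\rho$, the boundary point condition forces $\dim\rho=n$, and you verify the unique extension property directly by decomposing the Stinespring representation of an arbitrary extension into its $\pi$-isotypic part and complement, killing the complement with one more application of the boundary point condition. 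This is more work but is self-contained and entirely finite-dimensional, avoiding the separability/disintegration machinery hidden inside Theorem~\ref{mainthm}; the price is that your argument, unlike the paper's intermediate results, is genuinely tied to $A(\bs{K})\subseteq M_l$. Two small points to tighten: in your step three you should first invoke Corollary~\ref{pureextension} to choose a \emph{pure} ucp extension of $\phi_b$ to $C^{\ast}(A(\bs{K}))$ before taking the minimal Stinespring dilation, since purity on $A(\bs{K})$ does not by itself make the dilation of an arbitrary extension irreducible; and in both directions the terms with $v_i=0$ (equivalently $\lambda_i=0$) must be discarded before the boundary point condition is quoted, a point the paper also glosses over. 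Your converse direction is essentially the paper's argument: where the paper uses purity to get $t_j\pi|_{A(\bs{K})}=v_j^{\ast}\phi_{a_j}(\cdot)v_j$ and then maximality to split $\phi_{a_j}$, you use the unique extension property plus the Schwarz inequality to extract the intertwining $\tilde{\phi}_{a_i}(a)v_i=v_i\pi(a)$ and read off the same splitting --- the multiplicative-domain computation is exactly the standard proof that maximality yields direct summands, so the two are interchangeable here.
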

\begin{proof}
We may assume that $A(\bs{K})\subseteq M_l$ for some $l$.  The
collection $(\ucp{A(\bs{K})}{M_n})$ is a compact matrix convex set in
$A(\bs{K})^{\ast}\subseteq M_l^{\ast}$. Applying the Webster-Winkler theorem in this finite-dimensional
setting, we have
\begin{align*}
\tmop{co}(\bs{\partial} (\ucp{A(\bs{K})}{M_n})) &= (\ucp{A(\bs{K})}{M_n}).
\end{align*}
Let $b\in K_n$ be a boundary point of $\bs{K}$; identify it with its
image $\phi_b$ in $\ucp{A(\bs{K})}{M_n}$.  We show that $\phi_b$ is
unitarily equivalent to the restriction of a boundary representation
to $A(\bs{K})$.  Write $\phi_b$ as a proper matrix convex combination of
matrix extreme points $\{\phi_1,\phi_2,\ldots,\phi_m\}\subset (\ucp{A(\bs{K})}{M_n})$; each $\phi_i$
is pure by \cite[Theorem B]{Farenick00}.  It follows from the
definition of boundary point that $\phi_b\sim_u \phi_i$ for $i=1,2,\ldots,m$, so $\phi_b$ is pure.  Theorem~\ref{mainthm} implies that the pure matrix state $\phi_b$ is a compression of a boundary representation $\pi$ for $A(\bs{K})$: 
\begin{align}\label{eq100}
\phi_b(\cdot)=v^{\ast}\pi(\cdot) v,
\end{align}
for some isometry $v$. The representation $\pi$ acts on a finite-dimensional Hilbert space
(since $C^{\ast}(A(\bs{K}))\subseteq M_l$), so $\pi|_{A(\bs{K})}\in
(\ucp{A(\bs{K})}{M_n})$. Thus \eqref{eq100} is a matrix convex
combination in $(\ucp{A(\bs{K})}{M_n})$. The ucp map $\pi|_{A(\bs{K})}$ is
pure (\cite[Lemma 2.4.3]{Arveson69}).  If $v$ is a proper isometry,
then by the definition of boundary point, $\phi_b$ is a direct summand
of a unitary conjugate of $\pi|_{A(\bs{K})}$, which contradicts the fact that
$\pi|_{A(\bs{K})}$ is pure.  Thus $v$ is unitary.

Now suppose that $\pi$ is a boundary representation for $A(\bs{K})$ acting on
$\C{n}$; identify $\pi|_{A(\bs{K})}$ with its image $b_{\pi}\in K_n$.  Suppose
$b_{\pi}$ is a matrix convex combination
\begin{align*}
b_{\pi} &= \sum_{i=1}^m v_i^{\ast}a_iv_i,
\end{align*}   
where $a_i$ is in $K_{n_i}$ for $i=1,2,\ldots,m$.  We show that if $n_i\leq
n$, then $b_{\pi}\sim_u a_i$; otherwise, there exists $c_i\in \bs{K}$ such that $a_i\sim_u b_{\pi}\oplus
c_i$.  Using \eqref{eq102}, we may rewrite the above equation as
\begin{align}\label{eq104}
\pi|_{A(\bs{K})}(\cdot) &= \sum_{i=1}^m v_i^{\ast}\phi_{a_i}(\cdot)v_i.
\end{align}
It follows that $\pi|_{A(\bs{K})}\geq
\tmop{Ad}\,v_i\circ \phi_{a_i}$ for $i=1,2,\ldots,m$. Fix $j\in
\{1,2,\ldots,m\}$. The ucp map $\pi|_{A(\bs{K})}$ is pure, so there exists $t_j\in [0,1]$ such that $t_j\pi|_{A(\bs{K})}(\cdot)=v_j^{\ast}\phi_{a_j}(\cdot)v_j$. This
implies $t_j1_r=v_j^{\ast}v_j$.  Assuming that $t_j\neq
0$, it follows that $t_j^{-1/2}v_j$ is an isometry and
$\pi|_{A(\bs{K})}(\cdot)
=(t^{-1/2}v_j)^{\ast}\phi_{a_j}(\cdot)(t^{-1/2}v_j)$.  Therefore, if
$n_j\leq n$, we conclude that in fact $n_j=n$.  This forces
$t_j^{-1/2}v_j$ to be unitary.  Otherwise, $t_j^{-1/2}v_j$ is a
proper isometry.  Because $\pi|_{A(\bs{K})}$ is maximal, we
must have $\phi_{a_j}\sim_u \pi|_{A(\bs{K})}\oplus \psi$ for some $\psi$ in
$(\ucp{A(\bs{K})}{M_n})$, which implies $a_j\sim_u b_{\pi}\oplus a_{\psi}$.
\end{proof}

Farenick identified matrix extreme points of
$\bs{K}$ with pure ucp maps in $(\ucp{A(\bs{K})}{M_n})$. Now assume
that $A(\bs{K})$ acts on a finite-dimensional Hilbert space. In the above
theorem, we identified boundary points of $\bs{K}$ with boundary
representation for $A(\bs{K})$. We know from Theorem~\ref{mainthm} that every pure matrix state of $A(\bs{K})$ is a compression of a boundary representation for $A(\bs{K})$.  Using \eqref{eq102}, we get as a corollary that every matrix extreme
point of $\bs{K}$ is a compression of a boundary point of $\bs{K}$.
We can apply this to get another simple corollary: the set of boundary points of $\bs{K}$ is the minimal subset of $\bs{K}$ that recovers $\bs{K}$.

\begin{cor}
Let $\bs{K}$ be a compact matrix convex set in a locally convex vector
space $E$.  Suppose $A(\bs{K})$ acts on a finite-dimensional
Hilbert space. Let $\Gamma$ be a subset of $\bs{K}$.  Then $\tmop{co}(\Gamma)=\bs{K}$ iff for every boundary point $b\in
\bs{K}$, there are an isometry $v$ and an element $g$ of $\Gamma$ such that
$b=v^{\ast}gv$. 
\end{cor}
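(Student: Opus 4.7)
The plan is to prove the two implications separately, leveraging the preceding corollary---every matrix extreme point of $\bs{K}$ is an isometric compression of a boundary point of $\bs{K}$---together with the closure-free, finite-dimensional Webster--Winkler theorem used in the proof of Theorem~\ref{bdrypt}.

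For the forward direction ($\Rightarrow$), I would start with a boundary point $b \in K_n$ and, using $b \in \bs{K} = \tmop{co}(\Gamma)$, expand it as a matrix convex combination $b = \sum_{i=1}^m v_i^{\ast} g_i v_i$ with $g_i \in \Gamma \cap K_{n_i}$ and $\sum_i v_i^{\ast} v_i = 1_n$. Feeding this (not necessarily proper) combination into the definition of boundary point yields, for each $i$, one of two alternatives: either $g_i \sim_u b$ (when $n_i \le n$), or $g_i \sim_u b \oplus c_i$ for some $c_i \in \bs{K}$ (when $n_i > n$). Both alternatives produce an isometry $v$---a unitary in the first case; the composition of the canonical embedding $\C{n} \hookrightarrow \C{n_i}$ with the implementing unitary in the second---such that $b = v^{\ast} g_i v$. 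Taking $g := g_i$ for any such $i$ finishes this direction.

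For the converse ($\Leftarrow$), suppose every boundary point of $\bs{K}$ is an isometric compression of some element of $\Gamma$. The corollary preceding the present statement says that every matrix extreme point of $\bs{K}$ is an isometric compression of some boundary point of $\bs{K}$. Composing the two compressions---and using that a product of isometries is again an isometry---shows that every matrix extreme point of $\bs{K}$ is an isometric compression of some element of $\Gamma$, and hence lies in $\tmop{co}(\Gamma)$ since matrix convex sets are closed under isometric compressions. The finite-dimensional Webster--Winkler theorem then supplies $\bs{K} = \tmop{co}(\bs{\partial K}) \subseteq \tmop{co}(\Gamma) \subseteq \bs{K}$, which gives equality.

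The only real friction is the bookkeeping in the forward direction: handling the two cases of the boundary-point definition and producing an honest isometry in each. Once that is settled, the corollary drops out of the prior theorems without further work.
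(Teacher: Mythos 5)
Your proposal is correct, and the converse direction is essentially the paper's argument: Webster--Winkler gives $\bs{K}=\tmop{co}(\bs{\partial K})$, each matrix extreme point is a compression of a boundary point, each boundary point is by hypothesis a compression of an element of $\Gamma$, and composing isometries places $\bs{\partial K}$ inside $\tmop{co}(\Gamma)$. The forward direction, however, is genuinely different from the paper's. You apply the definition of boundary point directly to the combination $b=\sum_i v_i^{\ast}g_iv_i$ and read off, for each nondegenerate term, either $g_i\sim_u b$ or $g_i\sim_u b\oplus c_i$, from which the isometry is immediate; this is shorter and needs nothing beyond the definition. The paper instead passes to $\phi_b\in\ucp{A(\bs{K})}{M_n}$, invokes Theorem~\ref{bdrypt} to see that $\phi_b$ is pure and maximal, and reruns the purity argument following equation~\eqref{eq104}: from $\phi_b\geq \tmop{Ad}\,v_i\circ\phi_{g_i}$ one gets $t_i\phi_b=v_i^{\ast}\phi_{g_i}(\cdot)v_i$ with $v_i^{\ast}v_i=t_i1$, so $t_i^{-1/2}v_i$ is the desired isometry. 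What the paper's route buys is an explicit isometry (a rescaling of the given $v_i$) and the stronger conclusion that $b$ is a compression of \emph{every} $g_i$ appearing nontrivially; what yours buys is economy, since it bypasses Theorem~\ref{bdrypt} entirely in that direction. One small point present in both treatments but worth making explicit: terms with $v_i^{\ast}g_iv_i=0$ must be discarded before invoking the boundary-point property (the paper flags this parenthetically), and at least one nondegenerate term exists because $\sum_i v_i^{\ast}v_i=1_n$.
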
 
\begin{proof}
Assume $\tmop{co}(\Gamma)=\bs{K}$. Let $b\in K_n$ be a boundary point
of $\bs{K}$. By assumption, we may write it as a matrix convex
combination of $g_1,g_2,\ldots,g_m$ in $\Gamma$:
\begin{align*}
b &= \sum_{i=1}^m v_i^{\ast}g_iv_i.
\end{align*}
Identify $\bs{K}$ with $(\ucp{A(\bs{K})}{M_n})$ as in \eqref{eq102}, so $K_n\ni b\mapsto
\phi_b\in \ucp{S}{M_n}$.  By Theorem~\ref{bdrypt}, $\phi_b$ is pure
and maximal.  We may use the same techniques as those following equation~\eqref{eq104} to conclude that $\phi_b$ is a compression of $\phi_{g_i}$ (assuming that
$v_i^{\ast}g_iv_i\neq 0$) for each $i=1,2,\ldots,m$.  We conclude $b$
is a compression of $g_i$ for $i=1,2,\ldots,m$.

Now suppose that for every boundary point $b\in
\bs{K}$, there are an isometry $v$ and $g\in \Gamma$ such that
$b=v^{\ast}gv$.  Let $a$ be in $\bs{K}$; we want to show that $a$ is in
$\tmop{co}(\Gamma)$.  Use the Webster-Winkler theorem to write $a$ as a
matrix convex combination of matrix extreme points. By the result
mentioned above, each matrix extreme point is a compression of a
boundary point.  Thus we may write $a$ as a (not-necessarily proper) matrix convex combination:
\begin{align*}
a &= \sum_{i=1}^m v_i^{\ast}b_iv_i,
\end{align*} 
where $b_i$ is a boundary point for $i=1,2,\ldots,m$. By assumption,
there exist $g_i\in \Gamma$ and an isometry $y_i$ such that
$b_i=y_i^{\ast}g_iy_i$ for $i=1,2,\ldots,m$. Thus
\begin{align*}
a &= \sum_{i=1}^m (y_iv_i)^{\ast}g_i(y_iv_i),
\end{align*} 
which is a matrix convex combination of elements of $\Gamma$.
\end{proof}

\begin{remark}\label{lastrem}
Let $E$ be a locally convex vector space. There is an obvious way to
define $C^{\ast}$-convexity in $M_l(E)$;
consequently, when $\Gamma$ is  compact and $C^{\ast}$-convex in $M_l(E)$, we may
apply Morenz's definition of structural element (\cite[Definition 2.1 and Definition
2.3]{Morenz94}) to $\Gamma$.  Now
suppose $S$ is an operator system acting on $\C{l}$.  The set
$\ucp{S}{M_l}$ is a compact $C^{\ast}$-convex subset of $M_l(S^{\ast})$, and one can show that the structural elements of
this set are exactly the boundary points of $(\ucp{S}{M_n})$.  This,
and Theorem~\ref{bdrypt}, show that any two of the following three sets are in 1-1 correspondence: the boundary points of $(\ucp{S}{M_n})$, the boundary
representations for $S$, and the structural elements of $\ucp{S}{M_l}$ (\cite{Kleski12}).
\end{remark}

\begin{acknowledgements}
I thank my Ph.D. advisor, David Sherman, who was enormously helpful during the
preparation of this paper.
\end{acknowledgements}

\bibliographystyle{amsalpha}
\bibliography{docs}

\end{document}